\documentclass{article}
\usepackage{amsmath,amsthm,amssymb,amscd, url}

\newcommand{\cP}{\mathcal{P}}

\newcommand{\bbR}{\mathbb{R}}
\newcommand{\Fin}{\mathrm{Fin}}
\newcommand{\bbC}{\mathbb{C}}
\newcommand{\bzero}{{\bf 0}}

\newcommand{\ZF}{\mathsf{ZF}}
\newcommand{\AC}{\mathsf{AC}}

\newcommand{\bbQ}{\mathbb{Q}}
\newcommand{\bbF}{\mathbb{F}}
\newcommand{\CCR}{\mathsf{CC}_{\mathbb{R}}}

\newtheorem{theorem}{Theorem}[section]
\newtheorem{lemma}[theorem]{Lemma}

\newtheorem{corollary}[theorem]{Corollary}

\theoremstyle{definition}

%%%%%%%%%%%%%%%%%%%%%%%%%%%

\begin{document}

\title{Discontinuous homomorphisms, selectors and automorphisms of the complex field (in $\ZF$)
\footnote{2000 AMS subject classification 03E25; 12D99, 54H11. Keywords : Axiom of Choice, discontinuous homomorphisms
equivalence relations, automorphisms of the complex field.}}

\author{
Paul Larson
\thanks{Partially supported by NSF grant DMS-1201494.}
\\
Miami University\\
\and
Jind{\v r}ich Zapletal
\thanks{Partially supported by NSF grant DMS-1161078. 
The authors thank Christian Rosendal and Simon Thomas for helpful comments on this paper.}
\\University of Florida
}

%    \subjclass is required.
%\subjclass[2010]{Primary 03E25; Secondary 12D99, 54H11. }
%\keywords{Axiom of Choice, discontinuous homomorphisms,
%equivalence relations, automorphisms of the complex field.}

%\date{}

%\dedicatory{}

%    "Communicated by" -- provide editor's name; required.
%\commby{Heike Mildenberger}

\maketitle

\begin{abstract}
 We show, in Zermelo-Fraenkel set theory without the Axiom of Choice, that the existence of a discontinuous homomorphism between separable Banach spaces induces a selector for the Vitali equivalence relation $\bbR/\bbQ$. In conjunction with a result of Di Prisco and Todorcevic, this shows that a nonprincipal ultrafilter on the integers is not sufficient to construct a discontinuous automorphism of the complex field, confirming a conjecture of Simon Thomas. This is an improved version of \cite{LZpams}, which used a weak version of the Axiom of Choice for the same result.
\end{abstract}

Assuming the Zermelo-Fraenkel axioms for set theory ($\ZF$), the Axiom of Choice ($\AC$) implies that every vector space has a basis (in fact the two statements are equivalent over $\ZF$ \cite{Blass}).
The existence of a basis for the vector space $\bbR$ over the field of scalars $\bbQ$ in turn implies, in $\ZF$, the existence of a selector for the Vitali equivalence relation $\bbR/\bbQ$ (the equivalence relation on $\bbR$ defined by the formula $x - y \in \bbQ$) and the existence of a discontinuous homomorphism from the group $(\bbR, +)$ to itself (see \cite{Khara1, Khara2, Heil}, for instance). We show that the existence of a discontinuous homomorphism from $(\bbR, +)$ to itself implies the existence of a selector for $\bbR/\bbQ$. We do this without using the axiom $\CCR$, which asserts the existence of Choice function for each countable set of subsets of $\bbR$, which we did use to prove the same result in \cite{LZpams}. Our result applies to the additive group of any separable Banach space in place of $(\bbR, +)$.

%We show, using a weak form of Choice ($\CCR$, which asserts the existence of Choice function for each countable set of subsets of $\bbR$) that %the existence of a discontinuous homomorphism from $(\bbR, +)$ to itself implies the existence of a selector for $\bbR/\bbQ$.

A \emph{selector} for an equivalence relation $E$ on a set $X$ is a subset of $X$ meeting each $E$-equivalence class in exactly one point. The classical construction of a nonmeasurable Vitali set begins by using $\AC$ to find a selector for $\bbR/\bbQ$. Instead of $\bbR/\bbQ$ however we will work with the equivalence relation $E_{0}$ of mod-finite equivalence for subsets of $\omega$; our introduction of $\bbR/\bbQ$ is only for the expository benefit of readers who are less familiar with $E_{0}$. The equivalence relations $\bbR/\bbQ$ and $\cP(\omega)/E_{0}$ are both hyperfinite and nonsmooth, so Borel bi-embeddable (see \cite{DJK}), which implies among other things that the existence of a selector for either of these equivalence relations implies the existence of one for the other.

The result in this paper confirms a conjecture of Simon Thomas saying that the existence of a nonprincipal ultrafilter on the integers is consistent with the nonexistence of a discontinuous automorphism of the complex field. We briefly give some background information connecting our result to his conjecture. Let $P$ be the set of primes, and for each $p \in P$ let $\bar{\bbF}_{p}$ be the algebraic closure of the field $\bbF_{p}$ of size $p$. Given a nonprincipal ultrafilter $U$ on $P$, the $U$-ultraproduct $\prod_{U}\bar{\bbF}_{p}$ is an algebraically closed field of characteristic $0$ and cardinality $2^{\aleph_{0}}$. It follows that if $\AC$ holds (or just if there is a wellordering of $\cP(\omega)$) this ultraproduct is isomorphic to the complex field $(\bbC, +, \cdot)$ (see \cite{CK}, for instance). Even without $\AC$, this ultraproduct has $2^{\aleph_{0}}$ many automorphisms induced by the powers of the Frobenius automorphisms of the fields $\bar{\bbF}_{p}$ (see \cite{Garrett, LidlNied}).

Di Prisco and Todorcevic proved in \cite{DiPT} that a certain strong Ramsey principle for countable products of finite sets holds in Solovay's model $L(\bbR)$ from \cite{Solovay}.
%(the construction of this model requires a strongly inaccessible cardinal).
This principle has implications for forcing extensions of $L(\bbR)$ via the partial order $\cP(\omega)/\Fin$ (such an extension has the form $L(\bbR)[U]$, where $U$ is a nonprincipal ultrafilter on $\omega$). For instance \cite{DiPT}, it implies that there is no $E_{0}$-selector in this model.
Thomas observed that this Ramsey principle also precludes the existence of an injection from $\prod_{U}\bar{\bbF}_{p}$ into $\bbC$ in $L(\bbR)[U]$.
He then conjectured that there are no discontinuous automorphisms of $(\bbC, + , \cdot)$ in this model, i.e., that the only automorphisms are
the identity function and complex conjugation. Our result confirms this conjecture, as the restriction of such an automorphism to $(\bbC, +)$ would be a discontinuous homomorphism. We state this formally in Theorem \ref{maincor} and Corollary \ref{autocor} below. We note that $\CCR$ holds in $L(\bbR)[U]$, as it is an inner model of a model $\AC$ with the same set of real numbers.

Let us say that an abelian topological group $(G, +)$ is \emph{suitable} if there is an invariant metric $d$ inducing the topology on $G$ such that
\begin{itemize}
\item $G$ is complete with respect to $d$;
\item letting $\bzero$ be the identity element of $G$, $d(\bzero, n\cdot x) = n \cdot d(\bzero, x)$ holds for all $x \in G$ and $n \in \omega$ (where $n \cdot x$ denotes the result of adding $x$ to itself $n$ times).
\end{itemize}
The additive group of a Banach space is suitable, under the metric given by the norm. Moreover, Theorem 1.2 of \cite{polymath} shows that a group is suitable if and only if it is isomorphic to closed subset of real Banach space under its addition operation. Note that the second condition above implies that a bounded metric cannot witness suitability. When working with a fixed suitable group $(G, +)$ and a witnessing metric $d_{G}$, we will write $\bzero_{G}$ for the identity element of $G$, $|x|_{G}$ for $d_{G}(\bzero_{G}, x)$ and $B_{G}(x, \epsilon)$ for $\{ y \in G : d(x,y) < \epsilon\}$.

\begin{lemma}[$\ZF$]\label{mainthrm}
  Suppose that $(G, +)$ and $(K, +)$ are suitable topological groups, and that $h \colon (G, +) \to (K, +)$ is a homomorphism.
   If there exists a convergent sequence $\langle x_{i} : i \in \omega \rangle$ in $G$ such that $\langle h(x_{i}) : i \in \omega \rangle$
   does not converge to $h(\lim_{n \in \omega} x_{i})$, then there is a selector for $E_{0}$.
\end{lemma}

\begin{proof}
  Let $d_{G}$ and $d_{K}$ be metrics witnessing the respective suitability of $(G, +)$ and $(K, +)$, and let $h$ and
   $\langle x_{i} : i \in \omega\rangle$ be as in the statement of the theorem. By the invariance of $d_{G}$ and $d_{K}$, it suffices to consider the case where
  $\langle x_{i} : i \in \omega \rangle$ converges to $\bzero_{G}$. Since $h(\bzero_{G}) = \bzero_{K}$, we have that $\langle h(x_{i}) :  \in \omega \rangle$ does not converge to $\bzero_{K}$, which means that for some $\epsilon > 0$ the set of $i \in \omega$ with $|h(x_{i})|_{K} \geq \epsilon$ is infinite.

  We may now find a sequence $\langle y_{i} : i \in \omega \rangle$ of elements of $G$ such that
  \begin{enumerate}
    \item for each $i \in \omega$ there exist $k \in \omega$ and $n \in \omega \setminus \{0\}$ such that $y_{i} = n \cdot x_{k}$;
    \item\label{condtwo} for all $i < j$ in $\omega$, $|y_{j}|_{G} < |y_{i}|_{G}/3$;
    \item\label{condthree} for all $i \in \omega$, $|h(y_{i})|_{K} > i + \sum_{j < i} |h(y_{j})|_{K}$.
  \end{enumerate}
  To see this, let $y_{0}$ be any element of $\{ x_{i} : i \in \omega\} \setminus \{\bzero_{G}\}$. Given $j \in \omega$ and $\{ y_{i} : i \leq j\}$,
  let $n \in \omega \setminus \{0\}$ be such that \[n\cdot \epsilon > (j+1) + \sum_{i < j + 1} |h(y_{i})|_{K}.\] There exists then a $k \in \omega$ such that $|x_{k}|_{G} < |y_{i}|_{G}/3n$ for all $i \leq j$ and such that $|h(x_{k})|_{K} \geq \epsilon$. Then $y_{j+1} = n\cdot x_{k}$ is as desired.

Condition (\ref{condtwo}) on $\langle y_{i} : i \in \omega \rangle$ implies that each value $|y_{i}|_{G}$ is more than $\sum \{ |y_{j}|_{G} : j > i\}$.
This in turn, along with the completeness of $G$, implies that $\sum_{i \in A} y_{i}$ converges for each $A \subseteq \omega$.
Let $Y = \{ y_{i}: i \in \omega \}$ and let $Y^{+}$ be the set of elements of $G$ which are sums of (finite or infinite) subsets of $Y$.
By condition (\ref{condtwo}) on $Y$, each $y \in Y^{+}$ is equal to $\sum \{ y_{i} : i  \in S_{y}\}$ for a unique subset $S_{y}$ of $\omega$.
Let $F$ be the equivalence relation on $Y^{+}$ where $y_{0} F y_{1}$  if and only if $S_{y_{0}}$ and $S_{y_{1}}$ have finite symmetric difference (i.e., $S_{y_{0}} E_{0} S_{y_{1}}$).
By condition (\ref{condthree}) on $\langle y_{i} : i < \omega \rangle$, if $y F y'$ and $i$ is the maximum point of disagreement between $S_{y}$ and $S_{y'}$, then
$d_{K}(h(y), h(y')) > i$. It follows that the $h$-preimage of each set of the form $B_{K}(\bzero_{K}, M)$ (for $M \in \bbR^{+}$) intersects each $F$-equivalence class in only finitely many points (since if $2M \leq i$, then for every $y$ in this intersection the set $S_{y} \setminus i$ is the same).
It follows from this (and the fact that there is a Borel linear order $<$ on $Y^{+}$ induced by the natural lexicographic order on $\cP(\omega)$) that there is an $F$-selector : for each $F$-equivalence class, let $M \in \mathbb{Z}^{+}$ be minimal so that the $h$-preimage of $B_{K}(\bzero_{K}, M)$ intersects the class, and then pick the $<$-least element of this intersection. Since $Y^{+}/F$ is isomorphic to $\cP(\omega)/E_{0}$ via the map $y \mapsto S_{y}$, there is then an $E_{0}$-selector.
\end{proof}

 Theorems \ref{maincor} and \ref{maincorb} are each applications of Lemma \ref{mainthrm}. A \emph{choice function} for a set $A$ is a function $c$ with domain $A \setminus \{0\}$ such that $c(a) \in a$ for all $a \in A \setminus \{0\}$.  If $D$ is dense subset of $G$, a choice function for the powerset of $D$ can be used to find convergent sequences from $D$. Recall that choice functions exist for the powerset of any wellorderable set, and therefore the powerset of any countable set, so the following theorem includes the case where $G$ is separable.

\begin{theorem}[$\ZF$]\label{maincor} Suppose that $(G, +)$ is a suitable group, $D$ is a dense subset of $G$, and that there exists a choice function for the powerset of $D$. If there is a discontinuous homomorphism of $(G, +)$ to itself, then there is a selector for $E_{0}$.
\end{theorem}

\begin{proof}
  Let $h$ be a discontinuous homomorphism from a suitable group $(G, +)$ to itself.
   %and let $d$ be a metric witnessing the suitability of $(G, +)$.
   Let $D$ be a wellorderable dense subset of $G$. Since $h$ is discontinuous, there exist a $x \in G$ and a sequence $\langle x_{n} : n \in \omega \rangle$ in $D$ such that $\langle x_{n} : n \in \omega \rangle$ converges to $x$ but $\langle h(x_{n}) : n \in \omega \rangle$ does not converge to $h(x)$.
  %Again we write $\bzero$ for the identity element of $(G, +)$.
  %Since $h$ is discontinuous, and $d$ is $G$-invariant, there exists an $\epsilon > 0$ such that for each $\delta > 0$ there exists an $x \in %B(\bzero, \delta)$ with $h(x) \not\in B(\bzero, \epsilon)$. For each $i \in \omega$, let $X_{i}$ be the set of $x \in B(\bzero, 1/(i + 1))$ %such that $h(x) \not\in B(\bzero, \epsilon)$. Then each $X_{i}$ is nonempty, and by $\CCR$ there is a sequence $\langle x_{i} : i \in \omega %\rangle$ with each $x_{i}$ in the corresponding $X_{i}$.
  Now we may apply Theorem \ref{mainthrm}.
\end{proof}

Rephrasing in terms of Banach spaces gives the following.

\begin{corollary}[$\ZF$]\label{maincor2} If there is a discontinuous homomorphism between separable Banach spaces then there is a selector for $E_{0}$.
\end{corollary}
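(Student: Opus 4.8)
The plan is to derive this as an immediate consequence of Corollary \ref{maincor}; the only work is to observe that the additive groups in question are suitable and, once the homomorphism is assumed discontinuous, of cardinality $2^{\aleph_{0}}$. First I would record that every Banach space $(X, \|\cdot\|)$ is suitable: the metric $d(x,y) = \|x - y\|$ is translation-invariant, it is complete precisely because $X$ is a Banach space, and $d(\bzero, n \cdot x) = \|n x\| = n \|x\| = n \cdot d(\bzero, x)$ for every $n \in \omega$, so both clauses of the definition of suitability hold. (Alternatively, this is immediate from Theorem 1.2 of \cite{polymath}, which says a group is suitable exactly when it embeds as a closed additive subgroup of a real Banach space.)

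Second, suppose $h \colon X \to Y$ is a discontinuous homomorphism between separable Banach spaces. I would note that $X$ and $Y$ must both be nontrivial: if either were $\{\bzero\}$, then $h$ would be the constant zero map, which is continuous. A nontrivial normed space has cardinality exactly $2^{\aleph_{0}}$: at least $2^{\aleph_{0}}$ because the one-dimensional subspace spanned by any nonzero vector has that cardinality, and at most $2^{\aleph_{0}}$ because, working in $\ZF$, sending each Cauchy sequence drawn from a fixed countable dense subset $D$ to its limit gives a surjection from a subset of $D^{\omega}$ onto the space. Hence $|X| = |Y| = 2^{\aleph_{0}}$, and Corollary \ref{maincor} applies directly to produce a selector for $E_0$.

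I do not anticipate a genuine obstacle here. The only point needing attention is bookkeeping about the foundations: the suitability verification and the cardinality computation should be carried out in plain $\ZF$, the hypothesis $\CCR$ being needed only inside the proof of Corollary \ref{maincor} (to choose the sequence $\langle x_i : i \in \omega \rangle$ witnessing discontinuity), not in this reduction.
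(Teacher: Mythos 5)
Your proposal is correct and matches the paper's (implicit) argument: the paper offers no separate proof of Corollary \ref{maincor2}, merely remarking that it is Corollary \ref{maincor} rephrased, having already observed earlier that the additive group of a Banach space is suitable under the metric given by the norm, so the only content is exactly the suitability and cardinality bookkeeping you carry out. One small caveat on your cardinality upper bound: in $\ZF$ a surjection from a subset of $D^{\omega}$ onto the space does not by itself yield an injection of the space into $\bbR$, so you should instead use the injection $x \mapsto \langle \|x - d_{n}\| : n \in \omega \rangle \in \bbR^{\omega}$ for a fixed enumeration $\langle d_{n} : n \in \omega \rangle$ of the dense set, after which Cantor--Schr\"oder--Bernstein (a theorem of $\ZF$) gives the bijection with $\bbR$.
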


Lemma \ref{mainthrm} and Theorem \ref{maincor} do not require the Axiom of Choice, but in general it may require some form of Choice to find a sequence $\langle x_{i} : i < \omega \rangle$ as in the statement of Lemma \ref{mainthrm}, given a discontinuous homomorphism on a suitable group.
Theorem \ref{maincorb} applies to the case of (possibly nonseparable) groups of cardinality continuum. 

\begin{theorem}[$\ZF + \CCR$]\label{maincorb} If there is a discontinuous homomorphism between suitable groups of cardinality $2^{\aleph_{0}}$ then there is a selector for $E_{0}$.
\end{theorem}

\begin{proof}
  Let $(G, +)$ and $(K, +)$ be suitable groups of cardinality $2^{\aleph_{0}}$, and let $h$ be a discontinuous homomorphism from
  $(G, +)$ to $(K, +)$. Let $d_{G}$ and $d_{K}$ be metrics on $G$ and $K$ witnessing suitability.
  Since $h$ is discontinuous, and $d_{G}$ and $d_{K}$ are invariant, there exists an $\epsilon > 0$ such that for each $\delta > 0$ there exists an $x \in B_{G}(\bzero_{G}, \delta)$ with $h(x) \not\in B_{K}(\bzero_{K}, \epsilon)$. For each $i \in \omega$, let $X_{i}$ be the set of $x \in B_{G}(\bzero_{G}, 1/(i + 1))$ such that $h(x) \not\in B_{K}(\bzero_{K}, \epsilon)$. Then each $X_{i}$ is nonempty, and by $\CCR$ there is a sequence $\langle x_{i} : i \in \omega \rangle$ with each $x_{i}$ in the corresponding $X_{i}$. Now we may apply Lemma \ref{mainthrm}.
\end{proof}

Combined with the results of Di Prisco and Todorcevic cited above, we have the following corollary, which says that the assumption of the existence of a nonprincipal ultrafilter on the integers is not sufficient to define a third automorphism of the complex field.
The strongly inaccessible cardinal in the hypothesis (which we conjecture to be unnecessary) comes from the construction of the model $L(\bbR)$ in \cite{Solovay}.

\begin{corollary}\label{autocor}
  If the theory $\ZF$ is consistent with the existence of a strongly inaccessible cardinal, then it is also consistent with the conjunction of the following three statements:
  \begin{itemize}
  \item $\CCR$ holds;
  \item there is a nonprincipal ultrafilter on $\omega$;
  \item there are exactly two automorphisms of the complex field.
  \end{itemize}
\end{corollary}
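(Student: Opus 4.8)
The plan is to exhibit a single model realizing all three statements: Solovay's model $\lr$, followed by the forcing $\cP(\omega)/\Fin$ that adjoins a nonprincipal ultrafilter on $\omega$. First I would note that strong inaccessibility is downward absolute to $L$ (regularity and strong-limitness both persist when passing to $L$), so from $\mathsf{Con}(\ZF+\text{``there is a strongly inaccessible cardinal''})$ one gets $\mathsf{Con}(\ZF+\AC+\text{``there is a strongly inaccessible cardinal''})$; fix such a model $V$ with strongly inaccessible $\kappa$, let $G$ be $\colk$-generic over $V$, and let $N=\lr^{V[G]}$ be the resulting Solovay model. Since $N$ and $V[G]$ have the same reals, they compute the same poset $\cP(\omega)/\Fin$; I would take $H$ generic for this poset over $V[G]$ (hence also over $N$) and let $U$ be the induced nonprincipal ultrafilter on $\omega$, so that $N[U]=N[H]$. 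This $N[U]$ is exactly the kind of $\cP(\omega)/\Fin$-extension of Solovay's model — of the form $\lr[U]$ — to which the cited results of Di Prisco and Todorcevic apply, so in particular there is no $E_0$-selector in $N[U]$.

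Two of the three conclusions then hold in $N[U]$ cheaply: $U$ is a nonprincipal ultrafilter on $\omega$, and $\CCR$ holds by the argument already indicated before the statement of the corollary. Namely, $\cP(\omega)/\Fin$ is $\sigma$-closed, hence adds no reals, so $N$, $N[U]$, $V[G]$ and $V[G][H]$ all have the same set of reals; since $N[U]\subseteq V[G][H]$ is an inner model of a model of $\AC$ with the same reals, any choice function for a countable family of nonempty sets of reals of $N[U]$ can be found in $V[G][H]$ and, being coded by a single real, already belongs to $N[U]$.

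For the third conclusion it suffices to rule out an automorphism of $(\bbC,+,\cdot)$ other than the identity and complex conjugation, since those are two distinct automorphisms. I would first verify, in plain $\ZF$, that a continuous automorphism $\gs$ of $\bbC$ fixes $\bbQ$, hence is $\bbQ$-linear, hence — being continuous — $\bbR$-linear as a map on $\bbC=\bbR\oplus\bbR i$; since $\gs(1)=1$ and $\gs(i)^{2}=-1$ forces $\gs(i)=\pm i$, such a $\gs$ is the identity or conjugation. So any putative third automorphism $\gs\in N[U]$ is discontinuous for the Euclidean topology, and then $\gs\restriction(\bbC,+)$ is a discontinuous homomorphism from the additive group of a separable Banach space — equivalently, a suitable group of cardinality $2^{\aleph_{0}}$ — to itself. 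Since $\CCR$ holds in $N[U]$, Corollary \ref{maincor} (equivalently Corollary \ref{maincor2}) produces an $E_0$-selector in $N[U]$, contradicting the first paragraph. Hence $N[U]$ has exactly two automorphisms of the complex field, and $N[U]$ witnesses the asserted relative consistency.

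I expect the work to be bookkeeping rather than any single hard step: the substantive inputs — Solovay's model, the Di Prisco–Todorcevic Ramsey principle ruling out $E_0$-selectors, and Corollary \ref{maincor} — are all already in hand, and what needs care is the ``same reals'' transfers underlying the verification of $\CCR$ (via the $\sigma$-closure of $\cP(\omega)/\Fin$, itself sketched in the text) and the $\ZF$ verification that the only continuous automorphisms of $\bbC$ are the identity and conjugation.
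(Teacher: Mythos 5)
Your proposal is correct and follows exactly the route the paper intends (the paper leaves this corollary without a displayed proof, relying on the discussion in the introduction): build $L(\bbR)[U]$ over a Solovay model, use the Di Prisco--Todorcevic result to exclude an $E_0$-selector, note $\CCR$ holds since $L(\bbR)[U]$ is an inner model of a choice model with the same reals, and derive a contradiction from a third automorphism via Corollary \ref{maincor2}. The extra details you supply --- downward absoluteness of inaccessibility to $L$, the $\sigma$-closure of $\cP(\omega)/\Fin$, and the $\ZF$ verification that continuous automorphisms of $\bbC$ are only the identity and conjugation --- are all correct and are exactly the bookkeeping the paper elides.
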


This paper is part of the project outlined in \cite{LZ, LZ2}, which studies fragments of the Axiom of Choice holding in certain generic extensions of models of the Axiom of Determinacy. The following are shown in \cite{LZ2}, relative to the consistency of the existence of a strongly inaccessible cardinal.

\begin{enumerate}
\item The existence of an $E_{0}$ selector does not imply the existence of a discontinuous homomorphism on
$(\bbR, +)$.
%a discontinuous homomorphism of $(\bbC, +)$ would imply the existence of one for $(\bbR, +)$, or whether the existence of
%\item Does the existence of a discontinuous homomorphism of $(\bbR, +)$ give a discontinuous automorphism of $(\bbC, +, \cdot)$.
\item If we drop the second condition from the definition of suitability, Theorem \ref{maincor} no longer holds. In particular, letting $(G, +)$ be the group induced by addition modulo $1$ on the interval $[0, 1)$, the existence of a discontinuous homomorphism of $(G,+)$ does not imply the existence of an $E_{0}$-selector. 
\end{enumerate}

%\begin{question}
%Does the existence of a discontinuous homomorphism of $(\bbR, +)$ imply the existence of a discontinuous automorphism of $(\bbC, +, \cdot)$?
%\end{question}

Our proof of Theorem \ref{mainthrm} was discovered by adapting arguments from \cite{LZ}, with additional inspiration from \cite{Kestelman}.

We end with some related questions. The intended context for each question is the theory $\ZF + \CCR$, although the versions for other forms of $\AC$ may be interesting.
\begin{enumerate}

\item\label{q2} Does the existence of a discontinuous homomorphism on
$(\bbR, +)$ imply the existence of a Hamel basis for $\bbR$ over $\bbQ$?
\item Does the existence of a discontinuous homomorphism of $(\bbR, +)$ imply the existence of a discontinuous automorphism of $(\bbC, +, \cdot)$?
%\item If we drop the second condition from the definition of suitability, does Lemma \ref{mainthrm} still hold? In particular, does it hold for addition modulo $1$ on the interval $[0, 1)$?
\end{enumerate}

We thank Paul McKenney for reminding us of Question (\ref{q2}).

%It is shown in \cite{LZ}, assuming the existence of a proper class of Woodin cardinals, that the existence of an $E_{0}$-selector does not imply %the existence of a Hamel basis for $\bbR$ over $\bbQ$. It is shown in \cite{LZ2} that only a single strongly inaccessible cardinal is necessary %for this result, and in fact that, assuming the consistency of a strongly inaccessible cardinal, the existence of an $E_{0}$-selector does not %imply the existence of a discontinuous homomorphism on $(\bbR, +)$.

%This paper is part of the project started in \cite{LZ} and continued in \cite{LZ2}, which studies fragments of $\AC$ holding in generic extensions of Solovay models.

\bibliographystyle{amsplain}

\end{document}